\title{New optima for the deletion shadow}
\author{Benedict Randall Shaw}
\date{May 2025}
\begin{document}

\maketitle

\begin{abstract}
    For a family \(\mathcal{F}\) of words of length \(n\) drawn from an alphabet \(A=[r]=\{1,\dots,r\}\), Danh and Daykin defined the deletion shadow \(\Delta \mathcal{F}\) as the family containing all words that can be made by deleting one letter of a word of \(\mathcal{F}\). They asked, given the size of such a family, how small its deletion shadow can be, and answered this with a Kruskal-Katona type result when the alphabet has size \(2\). However, Leck showed that no ordering can give such a result for larger alphabets. The minimal shadow has been known for families of size \(s^n\), where the optimal family has form \([s]^n\). We give the minimal shadow for many intermediate sizes between these levels, showing that families of the form `all words in \([s]^n\) in which the symbol \(s\) appears at most \(k\) times' are optimal. This proves a conjecture of Bollobás and Leader. Our proof uses some fractional techniques that may be of independent interest.
\end{abstract}

\section{Introduction}

We consider words composed of exactly \(n\) symbols from a finite alphabet \(A\), which we will typically take to be \([r]=\{1,\dots,r\}\). We say a family \(\mathcal{F}\subset A^n\) has \emph{deletion shadow in direction} \(i\)
\[\delta_i\mathcal{F}=\left\{(x_1,\dots,x_{i-1},x_{i+1},\dots,x_n):(x_1,\dots,x_n)\in\mathcal{F}\right\},\]
and \emph{deletion shadow}
\[\Delta\mathcal{F}=\bigcup_{i=1}^n\delta_i\mathcal{F}.\]

For example, the family \(\mathcal{F}=\{112,123\}\) has deletion shadow \(\Delta\mathcal{F}=\{11,12,13,23\}\).

This notion was introduced by Danh and Daykin \cite{danh_ordering_1997}. Motivated by the Kruskal-Katona theorem, they asked the following question:

\begin{question}
    Given \(|\mathcal{F}|\), how small can \(|\Delta\mathcal{F}|\) be?
\end{question}

It is folklore that families of the form \([s]^n\) have minimal shadow given their size (a proof of which we outline later). However, this only gives the minimal deletion shadow for families of size \(1^n,2^n,\dots,r^n\), which is a very sparse set of values for \(r\) fixed and \(n\) large.

We define the \emph{simplicial order} for \(\{0,1\}^n\) by taking \(x<y\) if \(x\) contains the symbol \(1\) fewer times than \(y\), or if \(x\) and \(y\) each contain the symbol \(1\) the same number of times and at the first \(i\) such that \(x_i\neq y_i\), we have \(x_i=1\) and \(y_i=0\). Regarding words in \(\{0,1\}^n\) as indicator functions on \([n]\), this definition agrees with the notion of simplicial order on subsets of \([n]\). For an alphabet of two symbols, Danh and Daykin \cite{danh_ordering_1997}, \cite{danh_sets_1997} proved an equivalent of the Kruskal-Katona theorem for the deletion shadow.

\begin{theorem}[\cite{danh_ordering_1997}]
    For \(A=\{0,1\}\), across all families \(\mathcal{F}\subset A^n\) of a given size, the initial segment of the simplicial order minimises the deletion shadow.
\end{theorem}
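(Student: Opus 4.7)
The plan is to argue by compression, in the spirit of the Kruskal--Katona proof. For each $1 \le i < j \le n$ let the $(i,j)$-compression $C_{ij}: \{0,1\}^n \to \{0,1\}^n$ send a word $x$ with $x_i = 0$ and $x_j = 1$ to the word obtained by swapping the values at positions $i$ and $j$, and fix $x$ otherwise. Extended to families by $C_{ij}(\mathcal{F}) = \{C_{ij}(x) : x \in \mathcal{F}\} \cup \{x \in \mathcal{F} : C_{ij}(x) \in \mathcal{F}\}$, this preserves $|\mathcal{F}|$.

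The central step is to show $|\Delta C_{ij}(\mathcal{F})| \le |\Delta \mathcal{F}|$. My approach is to exhibit an injection $\Delta C_{ij}(\mathcal{F}) \setminus \Delta \mathcal{F} \hookrightarrow \Delta \mathcal{F} \setminus \Delta C_{ij}(\mathcal{F})$ based on the following commutation principle: for $x \in \mathcal{F}$ with $x_i = 0, x_j = 1$ and $y = C_{ij}(x)$, deleting a position $k \notin \{i, j\}$ from $y$ yields exactly the $(i', j')$-swap of the corresponding deletion of $x$, where $i', j'$ are the appropriate shifts of $i, j$. A ``new'' shadow element $\hat{y}_k$ of $C_{ij}(\mathcal{F})$ can then be sent to $\hat{x}_k \in \Delta \mathcal{F}$. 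The deletions at $k \in \{i, j\}$ require separate treatment, since a swapped coordinate disappears and the symmetry breaks.

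Iterating all $C_{ij}$ yields a left-compressed family $\mathcal{F}^*$ of the same size and at most the same shadow. To finish I would induct on $n$. Splitting $\mathcal{F}^* = 0\mathcal{A} \sqcup 1\mathcal{B}$ by first coordinate and decomposing $\Delta \mathcal{F}^*$ by the first coordinate of its elements gives
\[|\Delta \mathcal{F}^*| = |\Delta' \mathcal{A} \cup \mathcal{B}_0| + |\Delta' \mathcal{B} \cup \mathcal{A}_1|,\]
where $\Delta'$ denotes the deletion shadow in $\{0,1\}^{n-1}$ and $\mathcal{A}_1 = \{w : 1w \in \mathcal{A}\}$, $\mathcal{B}_0 = \{w : 0w \in \mathcal{B}\}$ (the simplification uses that $\{w : 0w \in \mathcal{A}\} \subseteq \Delta' \mathcal{A}$, trivially). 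Left-compressedness of $\mathcal{F}^*$ forces strong containments among these sub-families, for example $\mathcal{A}_1 \subseteq \mathcal{B}_0$ via $C_{12}$, which tighten both terms. Applying the inductive hypothesis to $\mathcal{A}$ and $\mathcal{B}$ reduces the remaining problem to checking that the optimal split of $|\mathcal{F}^*|$ between $|\mathcal{A}|$ and $|\mathcal{B}|$ matches the split realised by the simplicial initial segment of $\{0,1\}^n$.

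The hardest step will be the shadow-preservation under $C_{ij}$. Unlike in the classical Kruskal--Katona setting, distinct deletion positions can produce the same word (three consecutive $1$'s being the paradigm), so one cannot pair deletions across the compression naively; the injection must route carefully around these collisions, and the cases $k \in \{i, j\}$ need a separate argument to rule out any genuinely new shadow element after compression.
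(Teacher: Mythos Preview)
The paper does not supply a proof of this theorem; it is stated as a result of Danh and Daykin with a citation and no argument, so there is nothing in the paper to compare your proposal against directly.

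That said, your plan fails at its central step: the inequality $|\Delta C_{ij}(\mathcal{F})|\le|\Delta\mathcal{F}|$ is false for your position-swap compression. Take $n=3$ and $\mathcal{F}=\{011\}$. Since $x_1=0$ and $x_2=1$, your $C_{12}$ sends $011$ to $101$, and as $101\notin\mathcal{F}$ we get $C_{12}(\mathcal{F})=\{101\}$. But $\Delta\{011\}=\{11,01\}$ has two elements, while $\Delta\{101\}=\{01,11,10\}$ has three. The deletion shadow is sensitive to runs of equal symbols---adjacent equal letters yield the same word when either is deleted---and your swap breaks the run of $1$'s in $011$ into the isolated $1$'s of $101$, producing an extra distinct deletion. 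So the injection $\Delta C_{ij}(\mathcal{F})\setminus\Delta\mathcal{F}\hookrightarrow\Delta\mathcal{F}\setminus\Delta C_{ij}(\mathcal{F})$ you hope to build cannot exist in general; this is not a collision-routing subtlety but a hard cardinality obstruction. Position swaps are simply the wrong compression for this shadow. The only compression the paper itself verifies (in its down-compression lemma) is the one that replaces a $1$ by a $0$ at a fixed coordinate, which changes weight; getting from an arbitrary down-set to the simplicial initial segment without increasing the shadow still requires a further idea that your outline does not supply.
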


Danh and Daykin \cite{danh_structure_1996} defined a more complex order for general \(r\), the V-order, which they studied hoping a similar result would hold---that every initial segment of the V-order would have minimal deletion shadow for its size. However, Leck \cite{leck_nonexistence_2004} proved that for \(r\geq 3\), such a result is false not only for the V-order, but in fact for any order. This forbids an exact ordering result of this form---in general, no order can have all initial segments minimise their deletion shadow given their size.

However, it leaves open the possibility of an ordering \textit{many} of whose initial segments have minimal shadow given their size. Our main result will be the following, proving a conjecture of Bollobás and Leader (see \cite{raty_coordinate_2019}), which gives the optimal family for a much denser set of values than previously known:

\begin{theorem}
    For any \(s\leq r\), \(k\leq n\), the family \(\mathcal{F}\) comprising all words of \([s]^n\) that contain the symbol \(s\) at most \(k\) times has minimal deletion shadow of all families in \([r]^n\) of the same size.\label{thm:discreteresult}
\end{theorem}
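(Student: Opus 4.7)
The plan is to prove the theorem by induction on $n$, exploiting the recursive structure of the target family $\mathcal{F}^{\ast} = \{x \in [s]^n : |\{i : x_i = s\}| \leq k\}$. Deleting a letter from a word of $\mathcal{F}^{\ast}$ yields a word in $[s]^{n-1}$ with at most $k$ copies of $s$, and conversely any such shorter word can be extended by any symbol in $[s-1]$ to give an element of $\mathcal{F}^{\ast}$, so
\[ \Delta \mathcal{F}^{\ast} = \{y \in [s]^{n-1} : |\{i : y_i = s\}| \leq k\}, \qquad |\Delta \mathcal{F}^{\ast}| = \sum_{j=0}^{k}\binom{n-1}{j}(s-1)^{n-1-j}. \]
This is the target lower bound. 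The cases $k = 0$ and $k = n$ reduce to the folklore result that $[t]^n$ minimises the shadow at its size, and $n = 1$ is trivial.

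For the inductive step, I would decompose by last letter: for $\mathcal{F} \subseteq [r]^n$ of the target size, set $\mathcal{F}_a = \{y \in [r]^{n-1} : (y, a) \in \mathcal{F}\}$ with $m_a = |\mathcal{F}_a|$ and $\sum_a m_a = |\mathcal{F}^{\ast}|$. A short case analysis gives
\[ \Delta \mathcal{F} = \Bigl(\bigcup_a \mathcal{F}_a\Bigr) \cup \bigcup_a \bigl\{(y, a) : y \in \Delta \mathcal{F}_a\bigr\}, \]
where the first term is $\delta_n \mathcal{F}$ and the second comes from interior deletions, which preserve the last letter. A pleasant feature of the target is that its sub-families are nested, $\mathcal{F}^{\ast}_s \subseteq \mathcal{F}^{\ast}_1 = \cdots = \mathcal{F}^{\ast}_{s-1}$, each being the target family at the $(n-1)$-level (with parameter $k-1$ for $a = s$ and $k$ otherwise), so the two unions above coincide and no slack is wasted. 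Applying the inductive hypothesis to each $\mathcal{F}_a$ bounds the second union from below; the problem then reduces to showing that, over all size-preserving configurations $(m_a)$, the combined union is minimised at the nested configuration matching $\mathcal{F}^{\ast}$.

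The main obstacle is the coupling between these two unions: loose bounds that treat them independently miss the collapse that makes $\mathcal{F}^{\ast}$ optimal. Here I would introduce a fractional relaxation — replacing set indicators by functions $[r]^{n-1} \to [0,1]$ and the combinatorial union by a suitable convex functional (for instance a pointwise maximum) — to recast the problem as a convex optimisation amenable to Lagrangian or exchange arguments. The aim would then be to show that any shift of mass from a non-nested configuration toward the nested one strictly decreases the fractional shadow; once the optimum is forced onto a nested structure, a direct calculation identifies it with $\mathcal{F}^{\ast}$ and closes the induction. Proving this convexity/monotonicity property of the fractional shadow is, I expect, the technical heart of the proof, and likely where the fractional techniques advertised as ``of independent interest'' enter.
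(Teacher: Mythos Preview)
Your outline tracks the paper's strategy closely --- induction on $n$, last-coordinate decomposition, and a fractional relaxation to handle the coupling between $\delta_n\mathcal{F}$ and the interior deletions --- but there is a real gap at the fractional step. If you relax to arbitrary $f:[r]^n\to[0,1]$ with the pointwise-maximum shadow you suggest, then for every fixed total weight the minimiser is the \emph{uniform} system $f\equiv c$, not a Hamming ball: spreading mass evenly makes every maximum small. So the exchange claim you want --- that shifting mass toward the nested configuration decreases the fractional shadow --- is false in the unconstrained setting; the gradient points toward uniform, away from balls. The paper confronts this explicitly. Its remedy is first to reduce to down-compressed discrete families via the Danh--Daykin compression lemma (which you do not mention), and then, in the fractional world, to optimise only over what it calls ``$s$-compressed'' systems: those where positive mass at any coordinate value $\geq s$ forces mass $1$ at all smaller values. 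This condition is weak enough that fractional Hamming balls satisfy it but strong enough to exclude the uniform system, so the fractional optimum becomes the ball and the discrete theorem follows. Without some constraint playing this role, your fractional optimisation yields nothing.

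A smaller point: even with the right constraint in place, the paper does not run a single convexity or Lagrangian argument. It splits into three regimes depending on the weight of the top layer $f_s$ relative to that of the ball, and in each regime identifies which lower bound is binding --- sometimes $\max_x|f_x|$ coming from $\delta_n$, sometimes $\sum_x|\Delta f_x|$ coming from interior deletions --- and checks by direct calculation that the ball is extremal. Your ``shift toward nested'' heuristic is morally what happens, but the actual proof is a case analysis rather than one clean exchange inequality.
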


We will show this by considering a version of the problem in the setting of fractional families, a more general notion in which each word has a weight between \(0\) and \(1\), where weights \(0\) and \(1\) correspond respectively to non-membership and membership in discrete families. In this relaxed setting, we will be able to find the family with minimal deletion shadow of each weight. This guarantees that our argument does not come up against Leck's result, in that while we will be able to answer the question exactly in our generalisation, these fractional families will only correspond to discrete families at sizes matching the families in Theorem \ref{thm:discreteresult}.

We will have to be careful in choosing how to generalise the deletion shadow to fractional families in a way that interacts nicely with the minima we prove. Indeed, there are several natural definitions of a deletion shadow on fractional families. Perhaps counterintuitively, we choose a definition which in general is minimised not by the families we care about, but rather by the \textit{uniform fractional families}, whose indicator functions are constant. 

However, it is known that in the discrete setting, we may assume that our families are down-compressed. We therefore restrict to fractional families obeying a certain weaker compression condition. Again there are various ways to apply such a condition, and so the choice of this condition is key to the proof. A relatively standard argument then determines the minimal shadow of a fractional family of a given weight, subject to our restrictions. As an immediate consequence, we recover Theorem \ref{thm:discreteresult} in the discrete setting.

For other results on the deletion shadow, see Danh and Daykin \cite{danh_structure_1996,danh_ordering_1997,danh_sets_1997}. Other notions of the deletion shadow have also been studied. For example, Räty \cite{raty_coordinate_2019} proved an exact ordering result for the related shadow obtained by deleting only the character \(1\).

We finish this section by outlining the folklore proof that \([s]^n\) has minimal shadow given its size. We can view \(\delta_i\mathcal{F}\) as a projection of \(\mathcal{F}\) onto the hyperplane defined by \(x_i=0\). Then the discrete Loomis-Whitney inequality \cite{loomis_inequality_1949}, applied to \(\mathcal{F}\), states that
\[|\mathcal{F}|\geq\prod_{i=1}^n|\delta_i\mathcal{F}|^{\frac1{n-1}}.\]
As \(\delta_i\mathcal{F}\subset\Delta\mathcal{F}\), this gives the bound \(|\Delta\mathcal{F}|\geq |\mathcal{F}|^{\frac{n-1}{n}}\). This is attained by \([s]^n\). In particular, this implies that for \(s\leq r\), the family \(\mathcal{F}=[s]^n\) has minimal deletion shadow of all families of that size. 

In Section \ref{sec:overview}, we give a more detailed overview of the proof. In Section \ref{sec:down}, we present a version of Danh and Daykin's argument that we may assume discrete families with minimal shadow are down-compressed. In Section \ref{sec:frac}, we set up the the problem in the context of fractional families, and prove a more general result there which implies Theorem \ref{thm:discreteresult}. Finally, in Section \ref{sec:further}, we conjecture some other discrete families of minimal deletion shadow given their size.
\section{Overview}\label{sec:overview}

We first note that by an argument of Danh and Daykin \cite{danh_ordering_1997}, we may assume that the family \(\mathcal{F}\) is down-compressed---that is to say, if \(x_1\dots x_n\) is a word in \(\mathcal{F}\), then any word \(x'_1\dots x'_n\) with \(x'_i\leq x_i\) for each \(i\) is also in \(\mathcal{F}\). We present a version of their argument for completeness, although it will not be necessary to follow the rest of the proof.

Leck \cite{leck_nonexistence_2004} proved that there is no exact ordering of words whose initial segments each have minimal deletion shadow for their size. Thus any proof that would give us a result of this form must fail. We therefore work instead in fractional families, introduced by Bollobás and Leader in \cite{bollobas_isoperimetric_1991,bollobas_maximal_1993}. In this setting, words cannot just be members and non-members of the family, but may be members of the family to any extent in \([0,1]\). These allow us to move uniformly between the discrete families that we show to be optimal in Theorem \ref{thm:discreteresult} without saying anything at all about the optimal discrete families between those sizes.

We extend the notion of a deletion shadow accordingly. There are several possible ways to define the deletion shadow on fractional families, and we must choose a definition that interacts well with the problem. In fact, the version we use is minimised by the uniform fractional family which contains every set in \(A^n\) to the same extent. We would like to show that certain discrete families are optimal even in the wider setting of fractional families. Unfortunately, the uniform fractional family is not a discrete family.

However, we know we can assume an optimal discrete family is down-compressed, and the uniform fractional family is not down-compressed. Thus we instead minimise the deletion shadow across fractional families satisfying a certain weaker notion of down-compression, which we call \(s\)-\emph{compression}. Subject to this condition, we will be able to compute the exact minimal deletion shadow, which will imply Theorem \ref{thm:discreteresult}.

The proof is an induction on \(n\), essentially by a codimension-\(1\) compression in dimension \(n\). We divide the fractional family \(f\) into its layers
\[f_y(x_1,\dots,x_{n-1})=f(x_1,\dots,x_{n-1},y).\]
We first treat the case where \(f\) is zero outside \([s]^n\), where \(s\) is minimal such that \(f\) has weight at most \(s^n\). We define a notion of a fractional Hamming ball, and consider a fractional family \(g\) with the same weight as \(f\) so that each layer \(g_x\) is a fractional Hamming ball, \(g_s\) has the same weight as \(f_s\), and \(g_1,\dots,g_{s-1}\) each have the same weight. Our compression condition will tell us that \(g_s\) has weight at most that of \(g_1\). We will then show that the deletion shadow of \(f\) has weight at least as large as that of the fractional Hamming ball \(h\) with the same weight as \(f\).

We then distinguish three regimes depending on the weight of \(g_s\). If it has weight at most as large as that of \(h_s\), then we find that \(\Delta g\) has the same weight as \(g\)'s heaviest layer. We observe that by deletion in the last coordinate, \(f\) has shadow of weight at least as large as \(f\)'s largest layer, and thus has shadow of weight at least as large as that of \(g\). We then note that \(g\)'s shadow has weight at least as large as that of \(h\).

If \(g_s\) is heavier than \(h_s\), but \(g\) still meets our compression condition, we notice that the shadow of \(g\) is exactly the same as that obtained from deletion in non-final coordinates. We note that \(f\)'s shadow is at least as large as that obtained from \(f\) by deletion in non-final coordinates. Our induction on \(n\) show that this bound is minimised by \(g\). Since \(g\) attains this bound, it is optimal given the weight of \(g_s\). We then calculate that \(g\)'s shadow has weight at least as large as that of \(h\).

Finally, we consider the case where \(g_s\) is heavy enough that \(g\) no longer meets our compression condition. Here the bound of the previous case still applies to this one, and moreover its value does not depend on the exact distribution of weight between layers. This allows us to move our weight between until we are in the previous case, where the compression condition is satisfied, and simply apply the result for that case directly.

We finish by checking the case where \(f\) is not a subfamily of \([s]^n\). Here we write down an expression for the weight of \(\Delta f\) in terms of the weights of \(f_x\) with \(x> s\), and find it is a simple optimisation problem.

\section{Down-compression}\label{sec:down}

We say \(\mathcal{F}\subset A^n\) is \emph{down-compressed} if for any \(i\in[n]\),
\((x_1,\dots,x_n)\in\mathcal{F}\) and \(y\in A\) such that \(y<x_i\), we have \((x_1,\dots,x_{i-1},y,x_{i+1},\dots,x_n)\in \mathcal{F}\). We will need the following lemma, due to Danh and Daykin \cite{danh_ordering_1997}:

\begin{lemma}
    For any \(\mathcal{F}\subset A^n\), there exists a down-compressed \(\mathcal{F}'\) with \(|\mathcal{F}'|=|\mathcal{F}|\) such that \(|\Delta\mathcal{F}'|\leq |\Delta\mathcal{F}|\).\label{lem:downcompression}
\end{lemma}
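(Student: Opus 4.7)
The plan is iterated coordinate compression. For each coordinate $i \in [n]$ and each pair of symbols $y < z$ in $A$, define the compression $C_{i,y,z}$ by replacing every $x \in \mathcal{F}$ with $x_i = z$ and $c_{i,y,z}(x) \notin \mathcal{F}$ by $c_{i,y,z}(x)$, where $c_{i,y,z}$ changes the $i$-th coordinate from $z$ to $y$. This preserves $|\mathcal{F}|$, and the potential $\Phi(\mathcal{F}) = \sum_{x \in \mathcal{F}} \sum_k x_k$ strictly decreases at each nontrivial step; hence any sequence of compressions terminates at a family $\mathcal{F}'$ that is, by construction, down-compressed.

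The main technical work, and main obstacle, is to establish the shadow bound $|\Delta \mathcal{F}'| \leq |\Delta \mathcal{F}|$. A subtlety is that the deletion shadow need \emph{not} be monotone under a single compression step: for instance, in $\{0,1\}^2$ the family $\{(1,1)\}$ has shadow $\{(1)\}$ of size $1$, but $C_{1,0,1}(\{(1,1)\}) = \{(0,1)\}$ has shadow $\{(0),(1)\}$ of size $2$. Consequently, the standard step-by-step monotonicity argument familiar from the Kruskal--Katona proof cannot be applied here, and intermediate families in the compression sequence may have strictly larger shadow than $\mathcal{F}$ itself.

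To get around this, I would directly compare $\mathcal{F}$ with its terminal image $\mathcal{F}'$, constructing an injection $\phi \colon \Delta \mathcal{F}' \to \Delta \mathcal{F}$ by tracing each word back through the sequence of compressions, flipping at each stage the $y$ that was introduced by a compression back to the original $z$ at the appropriate coordinate of $A^{n-1}$. The bookkeeping is intricate because deletion of coordinate $j$ sends the compression's coordinate $i$ to $i-1$ (if $j<i$) or $i$ (if $j>i$), so the position of the flip must be tracked across the chain of compression steps; moreover, showing $\phi$ is injective requires ruling out collisions arising from distinct chains of compressions producing the same image. This global shadow-accounting is the delicate combinatorial core of the argument of Danh and Daykin.
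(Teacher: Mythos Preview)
Your counterexample is correct and identifies a real subtlety: a single coordinate compression $C_{i,y,z}$ can strictly increase $|\Delta\mathcal{F}|$. However, your proposed remedy---a global injection $\phi:\Delta\mathcal{F}'\to\Delta\mathcal{F}$ built by tracing back through the entire chain of compressions---is not actually carried out; you describe the bookkeeping as ``intricate'' and flag the collision problem without resolving it. As written this is a sketch of where a proof might live, not a proof, and you have also misidentified what Danh and Daykin actually do.

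The paper avoids the global bookkeeping entirely by a different grouping of the compressions. Rather than treating one $C^{a,b}_i$ at a time, it takes the composite $C^{a,b}=C^{a,b}_1\cdots C^{a,b}_n$, compressing in \emph{all} coordinates for a fixed pair $a<b$, and proves the containment $\Delta C^{a,b}\mathcal{F}\subset C^{a,b}\Delta\mathcal{F}$. This restores step-by-step monotonicity at the level of pairs $(a,b)$: in your own example, $C^{0,1}\{(1,1)\}=\{(0,0)\}$ has shadow $\{(0)\}$ of size $1$. The containment is established via a clean characterisation of membership in $C^{a,b}\mathcal{F}$ by nested quantifiers over $\{a,b\}$ (existential at coordinates equal to $a$, universal at coordinates equal to $b$, fixed elsewhere), which interacts correctly with deleting a single coordinate because any $Q_x$ can be weakened to an existential. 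This is the key idea you are missing: the right ``step'' is not a single coordinate but all coordinates for one symbol pair, after which ordinary monotonicity goes through.
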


For completeness, we present a version of their proof for the interested reader. The details will not be needed for the sections following. We make some rather heavy syntactic conventions, for which we apologise---these are not chosen to confuse the reader, but to allow the proof to be written down in a precise way.

\begin{proof}[Proof of Lemma \ref{lem:downcompression}]
For \(a,b\in A\) with \(a<b\), \(i\in [n]\), we define the compression \(C^{a,b}_i\mathcal{F}\) from \(\mathcal{F}\) by replacing any \((x_1,\dots,x_{i-1},b,x_{i+1},\dots,x_n)\) with \((x_1,\dots,x_{i-1},a,x_{i+1},\dots,x_n)\) unless it is already in the family \(\mathcal{F}\). Then clearly \(\left|C^{a,b}_i\mathcal{F}\right|=|\mathcal{F}|\).

Write \(C^{a,b}\mathcal{F}=C^{a,b}_1\cdots C_n^{a,b}\mathcal{F}\). Then it will suffice to show that \(\left|\Delta C^{a,b}\mathcal{F}\right|\leq |\Delta\mathcal{F}|\), as repeatedly applying different compressions of the form \(C^{a,b}\) until none of them change \(\mathcal{F}\) further eventually gives a down-compressed family. (Note that repeatedly applying these compressions must eventually result in a family that is fixed by all of them, as any such compression that changes \(\mathcal{F}\) must decrease the natural number \(\sum_{i\in[n],w\in\mathcal{F}}w_i\). Such a family is down-compressed, although it may depend on the order in which compressions are chosen.)

We now define, for each \(x\in A\), a quantifier \(Q_x\) which will allow us to write a concise condition for a given word to be a member of \(C^{a,b}\mathcal{F}\). For a sentence \(\phi=\phi(x_1,\dots,x_n,y_1,\dots,y_n)\), we will write \(Q_ay.\phi(y,x_1,\dots)\) if for some choice of \(y\in\{a,b\}\) the sentence \(\phi(y,x_1,\dots)\) holds---that is to say, \(Q_ay\) corresponds to the quantifier \(\exists y\in \{a,b\}\). We will write \(Q_by.\phi(y,x_1,\dots)\) if \(\phi(y,x_1,\dots)\) holds for both choices of \(y\in\{a,b\}\)---that is to say, \(Q_by\) corresponds to the quantifier \(\forall y\in \{a,b\}\). Finally, when \(x\in A\setminus\{a,b\}\), we write \(Q_xy.\phi(y,x_1,\dots)\) if \(\phi(y,x_1,\dots)\) holds when \(y=x\). Note that, all of these quantifiers imply \(\exists y . \phi(y,x_1,\dots)\).

For example, suppose that \(a=3\) and \(b=4\). Then the statement
\[Q_1w.Q_3x.Q_2y.Q_4z.\phi\left(w,x,y,z\right)\]
is equivalent to the statement
\[\exists x\in \{3,4\}.\forall z \in \{3,4\}.\phi(1,x,2,z).\]

Now notice that \((x_1,\dots,x_n)\in C^{a,b}\mathcal{F}\) if and only if \(Q_{x_1}y_1. (y_1,x_2,\dots,x_n)\in C_2^{a,b}\cdots C_n^{a,b}\mathcal{F}\). Inductively, we see that \((x_1,\dots,x_n)\in C^{a,b}\mathcal{F}\) whenever \(Q_{x_1}y_1\dots Q_{x_n}y_n. (y_1,\dots,y_n)\in \mathcal{F}\). However, for any formula \(\phi\), whenever \(Q_{x}y.\phi(y,x_1,\dots)\) we also have \(\exists y.\phi(y,x_1,\dots)\).

Now any word in \(\Delta C^{a,b}\mathcal{F}\) has the form \((x_1,\dots,x_{i-1},x_{i+1},\dots,x_n)\) for some \((x_1,\dots,x_n) \in C^{a,b} \mathcal{F}\). Then \(Q_{x_1}y_1\dots Q_{x_n}y_n. (y_1,\dots,y_n)\in \mathcal{F}\). Hence
\[Q_{x_1}y_1\dots Q_{x_{i-1}}y_{i-1}.\exists y_i.Q_{x_{i+1}}y_{i+1}\dots Q_{x_n}y_n. (y_1,\dots,y_n)\in \mathcal{F}.\]
And so
\[Q_{x_1}y_1\dots Q_{x_{i-1}}y_{i-1}.Q_{x_{i+1}}y_{i+1}\dots Q_{x_n}y_n. (y_1,\dots,y_{i-1},y_{i+1},\dots,y_n)\in \Delta\mathcal{F}.\]
But now we have \((x_1,\dots,x_{i-1},x_{i+1},\dots,x_n)\in C^{a,b}\Delta\mathcal{F}\), as desired.

Thus \(\Delta C^{a,b}\mathcal{F}\subset C^{a,b}\Delta\mathcal{F}\), so we must have \(\left|\Delta C^{a,b}\mathcal{F}\right|\leq |\Delta\mathcal{F}|\), giving the desired result.

\end{proof}

\section{The fractional setting}\label{sec:frac}

We can identify a family \(\mathcal{F}\subset A^n\) with its \emph{indicator function}, the function from \(A^n\) to the set \(\{0,1\}\) defined by \(f(x)=1\) for each \(x\in\mathcal{F}\), \(f(x)=0\) otherwise. We will work in fractional families, in which an element may be `partially present'. We define a \emph{fractional family} as any function from \(A^n\) to the unit interval \([0,1]\). This will play the role that indicator functions played in discrete (i.e.\ non-fractional) families. We say a family \(f\) has \emph{weight}
\[|f|=\sum_{w\in A^n}f(w).\]
Note that when \(f\) is the indicator function of a discrete family \(\mathcal{F}\), we have \(|f|=|\mathcal{F}|\). We will say a family \(f\) is a \emph{subfamily} of \(g\) if for every \(w \in A^n\), we have \(f(w)\leq g(w)\). Given a fractional family \(f\), we define the family \(\Delta f\) by
\[\left(\Delta f\right)(x_1,\dots,x_{n-1})=\max_{y\in A,i\in[n]}f(x_1,\dots,x_{i-1},y,x_i,\dots,x_{n-1}).\]
Again, if \(f\) is the indicator function of a discrete \(\mathcal{F}\subset A^n\), then \(\Delta f\) is the indicator function of \(\Delta \mathcal{F}\). For example, where we took a maximum, we could instead have taken a sum capped at \(1\), obtaining a related shadow \(\Delta_\Sigma f\). Another possible definition would be
\[\left(\Delta_{\Sigma\max} f\right)(x_1,\dots,x_{n-1})=\min\left\{\max_{i\in[n]}\sum_{y\in A}f(x_1,\dots,x_{i-1},y,x_i,\dots,x_{n-1}),1\right\}.\]
Unfortunately, our chosen definition of the deletion shadow is minimised, for a given weight of \(f\), by the \emph{uniform fractional family} which has \(f(w)\) constant, which gives no information about discrete families. This is not true of \(\Delta_\Sigma\) and \(\Delta_{\Sigma \max}\), making them attractive candidates to work with. However, we will see that \(\Delta\) interacts better with the families we show are optimal.

In discrete families, the \emph{Hamming ball} \(\mathcal{B}^{(n,s)}(k)\) is the family of words in \([s]^n\) containing the symbol \(s\) at most \(k\) times. We extend this notion to \textit{fractional balls}, whose indicator function is defined to be \(1\) on words containing the symbol \(s\) at most \(k\) times, some fixed \(\alpha\in [0,1]\) on words containing the symbol \(s\) at most \(k+1\) times, and \(0\) otherwise. We will call this \(b^{(n,s)}_{k,\alpha}\), or sometimes just \(b_{k,\alpha}\) where the meaning is clear. Note that \(b_{k-1,1}\) and \(b_{k,0}\) are the same family, which correspond to the discrete family \(\mathcal{B}^{(n,s)}(k)\).

These balls interact extremely well with the definition of the deletion shadow we have chosen, in that \(\Delta b^{(n,s)}_{k,\alpha}=b^{(n-1,s)}_{k,\alpha}\). This is not true of our other possible deletion shadows, where we find \(\Delta_\Sigma b_{k,\alpha}=b_{k,\min\{n(s-1)\alpha,1\}}\), and \(\Delta_{\Sigma\max} b_{k,\alpha}=b_{k,\min\{(s-1)\alpha,1\}}\). (Note that these shadows also decrease the word length \(n\) by \(1\), even though we have not marked this explicitly.) In particular, when \(\alpha=\frac{1}{s-1}\), any family \(f\) which has \(b_{k,\alpha}(w)\leq f(w) \leq b_{k,1}(w)\) for each word \(w\) would have the same shadow \(b_{k,1}\) in either \(\Delta_\Sigma\) or \(\Delta_{\Sigma \max}\), so if the fractional ball is optimal in these shadows, it is far from unique.

In order to work in \(\Delta\), we will need to restrict the families we optimise over to avoid the uniform fractional family. We know from Lemma \ref{lem:downcompression} that in discrete families, it suffices to consider down-compressed families. A natural extension of down-compression to fractional families is \emph{solid down-compression}: we say a family \(f\) on \(A^n\) is \emph{solidly down-compressed} if for any \(x_1,\dots,x_n,y\in A\) and \(i\in [n]\) such that \(y<x_i\), we have \[f(x_1,\dots,x_n)>0 \quad \implies \quad f(x_1,\dots,x_{i-1},y,x_{i+1},\dots,x_n)=1.\]
Indeed, restricted to solidly down-compressed families, \(\Delta\) and \(\Delta_{\Sigma\max}\) agree. 

However, the fractional ball is not generally down-compressed. Since we aim to show the fractional ball is optimal, we will need to work in a restricted version of compression which the fractional ball satisfies. For \(\ell<r\), we say a family \(f\) on \(A^n\) is \(\ell\)-\emph{compressed} if, for any \(x_1,\dots,x_n,y\in A\) and with \(i\in[n]\) such that \(y<x_i\) and \(\ell\leq x_i\), we have
\[f(x_1,\dots,x_n)>0 \quad \implies \quad f(x_1,\dots,x_{i-1},y,x_{i+1},\dots,x_n)=1.\] We prove the following:

\begin{theorem}
    Let \(f\) be an \(s\)-compressed fractional family on \(A^n\) with \(|f|\leq s^n\), and let \(h\) be the fractional Hamming ball on \(A^n\) with the same weight as \(f\). Then \(|\Delta f|\geq|\Delta h|\).\label{thm:fractionalresult}
\end{theorem}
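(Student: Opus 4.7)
My plan is to induct on \(n\), with the base case \(n=1\) being immediate. For the inductive step I would first dispense with the case where \(f\) has some weight outside \([s]^n\): the \(s\)-compression hypothesis heavily constrains \(f\) on words with coordinates below \(s\), and the remaining freedom in the layers indexed by symbols exceeding \(s\) reduces to a direct finite optimisation. The main work is thus the case where \(f\) is supported on \([s]^n\). There I would decompose \(f\) along its final coordinate into layers \(f_y(x_1,\dots,x_{n-1})=f(x_1,\dots,x_{n-1},y)\) for \(y\in[s]\), and construct an auxiliary system \(g\) of the same total weight whose layers \(g_y=b^{(n-1,s)}_{k_y,\alpha_y}\) are fractional Hamming balls, arranged so that \(|g_s|=|f_s|\) and the remaining weight is spread evenly over \(g_1,\dots,g_{s-1}\). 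A short mass count using the \(s\)-compression of \(f\) gives \(|g_s|\leq|g_1|\). The goal is then to establish \(|\Delta f|\geq|\Delta g|\geq|\Delta h|\), where \(h\) is the fractional Hamming ball with \(|h|=|f|\).

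For \(|\Delta f|\geq|\Delta g|\) I would distinguish three regimes according to \(|g_s|\). In the first, where \(|g_s|\leq|h_s|\), the ball identity \(\Delta b^{(n-1,s)}_{k,\alpha}=b^{(n-2,s)}_{k,\alpha}\) forces \(\Delta g\) to coincide pointwise with the heaviest layer \(g_1\), so \(|\Delta g|=|g_1|\); on the other side, \(\Delta f\) contains the deletion of \(f\) in its final coordinate, which already has weight at least \(\max_y|f_y|\), and this exceeds \(|g_1|\) since \(|g_1|\) is the average of \(|f_1|,\dots,|f_{s-1}|\) while \(|f_s|=|g_s|\leq|g_1|\). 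In the second regime, where \(|g_s|>|h_s|\) but \(g\) is still \(s\)-compressed, every point of \(\Delta g\) arises from a non-final deletion, giving \(|\Delta g|=\sum_y|\Delta g_y|\); combining the inductive hypothesis applied to each \(s\)-compressed layer \(f_y\) with a concavity argument (to handle the mismatch between \(|f_y|\) and \(|g_y|\) for \(y<s\)) delivers \(\sum_y|\Delta f_y|\geq\sum_y|\Delta g_y|\), and the inequality \(|\Delta f|\geq\sum_y|\Delta f_y|\) obtained from non-final deletions of \(f\) closes the argument. In the third regime, where \(|g_s|\) is so large that \(g\) itself is no longer \(s\)-compressed, the bound obtained in the second regime does not depend on the distribution of mass below level \(s\), so I can slide weight out of \(g_s\) into the other layers until the compression condition is restored and apply the second regime's bound directly.

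For \(|\Delta g|\geq|\Delta h|\) the layer weights of \(g\) are determined by the single parameter \(|g_s|\), so \(|\Delta g|\) becomes an explicit piecewise-linear function of \(|g_s|\); a direct calculation shows it is minimised when \(|g_s|=|h_s|\), which corresponds exactly to \(h\). I expect the main obstacle to be the second regime: aligning the layerwise inductive bounds on \(f_y\) with the layer weights of \(g\) (which are averages rather than individual \(|f_y|\)) requires the concavity step for the shadow weight as a function of ball weight, and verifying that each \(f_y\) inherits \(s\)-compression from \(f\) in a form the induction can consume takes some care. The regime boundaries, where the parameters \((k,\alpha)\) of the fractional ball shift, are where I expect the bookkeeping to be most delicate.
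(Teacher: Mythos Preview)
Your overall architecture matches the paper's proof closely: induction on \(n\), the layer decomposition \(f_y\), the auxiliary system \(g\) with equalised layers \(g_1=\dots=g_{s-1}\) and \(|g_s|=|f_s|\), and the same three regimes. Regimes 1 and 3, and the final optimisation for weight outside \([s]^n\), are handled as in the paper.

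The gap is in your second regime. You propose to apply the inductive hypothesis to each layer \(f_y\) separately, obtaining \(|\Delta f_y|\geq\phi(|f_y|)\) where \(\phi(w)\) is the shadow weight of the fractional ball of weight \(w\), and then to pass from \(\sum_{y<s}\phi(|f_y|)\) to \((s-1)\,\phi(\bar w)\) (with \(\bar w=|g_1|\) the average) by a concavity argument. But \(\phi\) is piecewise linear with slope \(\frac{n-k-2}{(s-1)(n-1)}\) on the segment with parameter \(k\), so the slopes decrease in \(k\) and \(\phi\) is genuinely concave. Jensen therefore gives \(\sum_{y<s}\phi(|f_y|)\leq(s-1)\phi(\bar w)\), the wrong direction, and the chain \(|\Delta f|\geq\sum_y|\Delta f_y|\geq\sum_y\phi(|f_y|)\) does not reach \(|\Delta g|=(s-1)\phi(\bar w)+\phi(|f_s|)\).

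The paper sidesteps this by averaging \emph{before} taking the shadow rather than after. It forms the single system \(f_{\text{avg}}=\frac{1}{s-1}\sum_{r<s}f_r\) and uses the pointwise inequality \(\Delta f_{\text{avg}}\leq\frac{1}{s-1}\sum_{r<s}\Delta f_r\) (max of an average is at most the average of the maxes). This yields \(\sum_{r<s}|\Delta f_r|\geq(s-1)|\Delta f_{\text{avg}}|\), and the inductive hypothesis is then applied once, to \(f_{\text{avg}}\), giving \(|\Delta f_{\text{avg}}|\geq\phi(\bar w)=|\Delta g_1|\). The order of operations---average, then shadow, then induction---is what makes the inequality go the right way; your order (induction on each layer, then average the bounds) collides with the concavity of \(\phi\).
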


Notice that any down-compressed discrete family is \(\ell\)-compressed for every \(\ell\). Thus by Lemma \ref{lem:downcompression}, this result implies Theorem \ref{thm:discreteresult}. The compression condition is essential in this theorem, as otherwise the minimal shadow is obtained by a totally uniform fractional family, i.e.\ one where \(f\) is constant. 
\begin{proof}[Proof of Theorem \ref{thm:discreteresult} from Theorem \ref{thm:fractionalresult}]
Suppose that the discrete family \(\mathcal{F}\) has minimal deletion shadow across all families with the same size as \(\mathcal{B}^{(n,s)}(k)\). By Lemma \ref{lem:downcompression}, we may assume that \(\mathcal{F}\) is down-compressed. Consider the indicator function \(f\) of \(\mathcal{F}\), and notice that as \(\mathcal{F}\) is down-compressed, certainly \(f\) is \(s\)-compressed, as down-compression is a stronger condition. We also have \(|f|\leq s^n\).

But now Theorem \ref{thm:fractionalresult} tells us that \(|\Delta f|\) is at least as large as \(|\Delta h|\), where \(h\) is the fractional Hamming ball on \(A^n\) with the same weight as \(f\). But then \(h\) is the indicator function of \(\mathcal{B}^{(n,s)}(k)\). Hence
\[\left|\Delta \mathcal{F}\right|=|\Delta f|\geq |\Delta h|=\left|\Delta \mathcal{B}^{(n,s)}(k)\right|.\]
Hence \(\mathcal{B}^{(n,s)}(k)\) has minimal deletion shadow of all discrete families in \([r]^n\) of the same size, as desired.
\end{proof}

\begin{proof}[Proof of Theorem \ref{thm:fractionalresult}]
    For \(x\in A\), define the family \(f_x:A^{n-1}\to[0,1]\) by
    \[f_x(x_1,\dots,x_{n-1})=f(x_1,\dots,x_{n-1},x).\]
    Note that by considering deletion in the last coordinate, for any \(x\) we find that \(f_x\) is a subfamily of \(\Delta f\). In particular, if some \(|f_x|>s^{n-1}\), then
    \[|\Delta f|\geq |f_x|> s^{n-1}\geq|\Delta h|,\]
    so we may assume that \(|f_x|\leq s^{n-1}\) for each \(x\).
    
    We first treat the case where \(f_x=0\) for all \(x>s\). We will then be able to use the result in this case to solve the remaining case as a simple optimisation problem.
    
    We proceed by induction on \(n\). Notice that for \(n=1\), we have \(|\Delta f|=\max\{f(1)\dots,f(r)\}\), and the condition of \(s\)-compression says that if some \(x\geq s\) has \(f(x)>0\), then \(f(y)=1\) for all \(y<x\). In particular, if \(|f|>s\), we must have some such \(f(x)>0\), and so \(f(0)=1\), giving \(|\Delta f|=1\), matching that of \(h\). Otherwise, we can only have \(|\Delta f|<1\) if \(f(x)=0\) for each \(x\geq s\), i.e.\ \(f(0)+\dots+f(s-1)=|f|\). But here \(|\Delta f|\) is minimised by setting \(f(0),\dots,f(s-1)\) equal, which is exactly the ball \(h\). Thus \(h\) is optimal, as desired.

    We now define \(g:A^n\to [0,1]\) by defining each layer \(g_x(x_1,\dots,x_{n-1})=g(x_1,\dots,x_{n-1},x)\). Set \(g_1=\dots=g_{s-1}\) to be the fractional Hamming ball \(b_{k,\alpha}=b_{k,\alpha}^{(n-1,s)}\) on \(A^{n-1}\) such that \((s-1)|g_1|=|f_1|+\dots+|f_{s-1}|\), and \(g_s\) to be the fractional Hamming ball \(b_{k',\alpha'}=b_{k',\alpha'}^{(n-1,s)}\) on \(A^{n-1}\) with the same weight as \(f_s\). We take \(g_x=0\) for every \(x>s\). Note that as \(f\) is \(s\)-compressed, we necessarily have \(k+\alpha\leq k'+\alpha'\).
    
    If \(k'<k\), then certainly \(g\) is \(s\)-compressed. Note that if \(g\) was exactly a ball on \(A^n\), we would have \(k'=k-1,\alpha'=\alpha\). We divide into three cases based on whether \(g_s\) has larger weight than \(b_{k-1,\alpha}\), and whether \(g\) is \(s\)-compressed.

    \paragraph{Case 1: \(k'+\alpha'\leq k-1+\alpha\)} Note that by considering deletion in the last coordinate,
    \[|\Delta f|\geq\max_{x\in [s-1]}|f_x|\geq |b_{k,\alpha}|.\]
    Now the family \(g\) is a subfamily of \(b_{k,\alpha}\), so \(\Delta g\) is a subfamily of \(b_{k,\alpha}\). Note that the two uses of \(b_{k,\alpha}\) refer to different families according to the word lengths of \(g, \Delta g\) respectively. However, \(g_1=b_{k,\alpha}\) is a subfamily of \(\Delta g\), so indeed \(\Delta g\) is exactly \(b_{k,\alpha}\). Hence we have \(|\Delta f|\geq|\Delta g|\). But now decreasing \(k+\alpha\) decreases \(|b_{k,\alpha}|\), so the smallest deletion shadow possible in this case is obtained when \(k'+\alpha'=k-1+\alpha\), in which case \(g=h\), as desired.

    \paragraph{Case 2: \(k-1+\alpha\leq k'+\alpha'\leq k\)} Now we have \(\Delta g\neq b_{k,\alpha}\), so we show \(g\) is optimal with a different lower bound on \(|\Delta f|\). By considering deletion in coordinates other than the last one, we note that
    \[(\Delta f_x)(w)\leq(\Delta f)_x(w)\]
    for each \(x\in A\) and \(w\in A^{n-2}\). Hence
    \[|\Delta f|\geq\sum_{x=1}^{s}|\Delta f_x|.\]
    But now define the family \(f_{\text{avg}}\) on \(A^{n-1}\) by \((s-1)f_{\text{avg}}(w)=f_1(w)+\dots+f_{s-1}(w)\). Then
    \begin{align*}
        \left(\Delta f_{\text{avg}}\right)(x_1,\dots,x_{n-1}){}&={}\max_{x'\in A,i\in[n]}f_{\text{avg}}(x_1,\dots,x_{i-1},x',x_i,\dots,x_{n-1})\\
        {}&\leq{}\frac1{s-1}\sum_{r=1}^{s-1}\max_{x'\in A,i\in[n]}f_r(x_1,\dots,x_{i-1},x',x_i,\dots,x_{n-1})\\
        {}&={}\frac1{s-1}\sum_{r=1}^{s-1}\left(\Delta f_r\right)(x_1,\dots,x_{n-1}).
    \end{align*}
    In particular, we find that
    \[|\Delta f|\geq (s-1)\left|\Delta f_{\text{avg}}\right|+\left|\Delta f_{s}\right|.\]
    But by induction, these terms are respectively minimised, given the weights of \(f_{\text{avg}}\) and \(f_s\), when these are both balls, in which case \(f_{\text{avg}}=g_1\) and \(f_s=g_s\). Now as \(k-1+\alpha \leq k' + \alpha' \leq k\),  certainly \(b_{k,\alpha}\) is a subfamily of \(g\). The only further contribution to the deletion shadow given by \(g_s\) is obtained by deleting non-final coordinates. Hence the deletion shadow of \(g\) is given by
    \begin{align*}\left(\Delta g\right)(x_1,\dots,x_{n-1}){}&={}b_{k,\alpha}(x_1,\dots,x_{n-2})&\text{ if }x_{n-2}<s,\\{}&={}b_{k',\alpha'}(x_1,\dots,x_{n-2})&\text{ if }x_{n-2}=s.\end{align*}
    In particular, \(|\Delta g|\) attains the bound \((s-1)\left|\Delta g_1\right|+\left|\Delta g_{s}\right|\), so is optimal given the weights of \(g_1\) and \(g_s\). We now ask how best to choose these weights to minimise the weight of \(\Delta g\).
    Now we have \begin{align*}|g|={}&{}(s-1)\left(\left|\mathcal{B}^{(n-1,s)}(k)\right|+\alpha {{n-1}\choose {k+1}}(s-1)^{n-k-2}\right)\\{}&{}+\left(\left|\mathcal{B}^{(n-1,s)}(k-1)\right|+\alpha' {{n-1}\choose {k}}(s-1)^{n-k-1}\right),\end{align*}
    but
    \begin{align*}\left|\Delta g\right|={}&{}(s-1)\left(\left|\mathcal{B}^{(n-2,s)}(k)\right|+\alpha {{n-2}\choose {k+1}}(s-1)^{n-k-3}\right)\\{}&{}+\left(\left|\mathcal{B}^{(n-2,s)}(k-1)\right|+\alpha' {{n-2}\choose {k}}(s-1)^{n-k-2}\right).\end{align*}
    Then there are positive constants \(c_1,c_2,c_3,c_4\) such that
    \[|g|=c_1+c_2\left(\alpha\frac{n-k-1}{k+1}+\alpha'\right),\]
    \[\left|\Delta g\right|=c_3+c_4\left(\alpha\frac{n-k-2}{k+1}+\alpha'\right).\]
    In particular, given the weight of \(g\), the weight of \(g\)'s shadow is minimised when \(\alpha\) is maximised. Within this case, this is attained when \(k'+\alpha'=k-1+\alpha\), in which case \(g\) is exactly \(h\), the Hamming ball on \(A^n\).

    \paragraph{Case 3: \(k=k'\)} As above, we have
    \[|\Delta f|\geq (s-1)\left|\Delta f_{\text{avg}}\right|+\left|\Delta f_{s}\right|\geq (s-1)\left|\Delta g_1\right|+\left|\Delta g_{s}\right|.\]
    However, as above there are also positive constants \(c'_1,c'_2,c'_3,c'_4\) such that
    \[|f|=c'_1+c'_2\left(\alpha(s-1)+\alpha'\right),\]
    \[(s-1)\left|\Delta g_1\right|+\left|\Delta g_s\right|=c'_3+c'_4\left(\alpha(s-1)+\alpha'\right).\]
    But now this bound depends only on the weight of \(f\), and is constant across all of Case 3. It will therefore be sufficient to show that this exceeds the weight of \(h\) for a single value of \(\alpha\). Given the weight of \(f\), we choose \(\alpha,\alpha'\) such that \(\alpha'=0\) or \(\alpha=1\). Then by the analysis of Case 2, \(g\) attains the desired bound. However, we have already shown that this has shadow has weight at least as large as that given by \(h\), as desired.

    \,

    We now consider the case where we may have \(f_x\neq 0\) for \(x>s\). Let \(f'\) be defined by \(f'_x=f_x\) for \(x\leq s\), and \(f'_x=0\) for any \(x>s\). Notice that as \(f\) is \(s\)-compressed, \(f_x\) is a subfamily of \(f_1\) for any \(x\geq s\), since whenever \(f_x(x_1,\dots,x_{n-1})>0\) we in fact have \(f_1(x_1,\dots,x_{n-1})=1\). Thus the contribution of \(f_{s+1},f_{s+2},\dots\) to the shadow of \(f\) is only from deletion of non-final coordinates. Now we have
    \[
    \Delta f\left(x_{1},\dots,x_{n}\right)=
    \begin{cases}
    \left(\Delta f^{\prime}\right)\left(x_{1},\dots,x_{n}\right) &
    \text{if }x_{n}<m,\\ f_{x_{n}}\left(x_{1},\dots,x_{n-1}\right) &
    \text{if }x_{n}\geq m.
    \end{cases}
    \]
    In particular,
    \[\left|\Delta f\right|=\left|\Delta f'\right|+\sum_{x=s+1}^{r}\left|f_x\right|.\]
    But these terms are respectively minimised by balls. Define \(g\) by letting \(g'\) be the fractional Hamming ball \(b^{(n,s)}_{k,\alpha}\) on \(A^n\) with the same weight as \(f'\), and for each \(x>s\), letting \(g_s\) be the ball \(b^{(n-1,s)}_{k_x,\alpha_x}\) on \(A^{n-1}\) with the same weight as \(f_x\). Without loss of generality, we take \(\alpha<1\) whenever \(|f'|<s^n\), and \(\alpha_x>0\) whenever \(|f_x|\neq 0\). Note that certainly \(\left|\Delta f\right|\geq\left|\Delta g\right|.\)

    Now as \(f\) is \(s\)-compressed, we have \(k_x<k\) for each \(x>s\). But now we have
    \begin{align*}|g|={}&{}\left|\mathcal{B}^{(n,s)}(k)\right|+\alpha {n\choose{k+1}}(s-1)^{n-k-1}\\{}&{}+\sum_{x=s+1}^{r}\left(\left|\mathcal{B}^{(n-1,s)}(k_x)\right|+\alpha_x {n-1\choose{k_x+1}}(s-1)^{n-k_x-2}\right),\end{align*}
    \begin{align*}\left|\Delta g\right|={}&{}\left|\mathcal{B}^{(n-1,s)}(k)\right|+\alpha {n-1\choose{k+1}}(s-1)^{n-k-2}\\{}&{}+\sum_{x=s+1}^{r}\left(\left|\mathcal{B}^{(n-2,s)}(k_x)\right|+\alpha_x {n-2\choose{k_x+1}}(s-1)^{n-k_x-3}\right).\end{align*}
    We now treat this as an optimisation problem, given the weight of \(g\). Write
    \[\beta=\alpha {n\choose{k+1}}(s-1)^{n-k-1},\ \beta_x=\alpha_x {n-2\choose{k_x+1}}(s-1)^{n-k_x-3},\]
    so that the constraint on the weight of \(g\) becomes a constraint on \(\beta+\sum_{x=s+1}^{r}\beta_x\). There is now some constant \(C\) such that
    \[\left|\Delta g\right|=C+\frac1{s-1}\left(\frac{n-k-1}{n}\beta + \sum_{x=s+1}^{r}\frac{n-k_x-2}{n-1}\beta_x\right).\]
    But \(k_x<k\) implies that \(\frac{n-k-1}n<\frac{n-k_x-2}{n-1}\). So decreasing some \(\beta_x\) to increase \(\beta\) decreases \(\left|\Delta g\right|\). Equivalently, decreasing some \(\alpha_x\) to increase \(\alpha\) decreases \(\left|\Delta g\right|\). Thus the optimum must have either \(\alpha=1\) or all \(\alpha_x=0\). But by our choice of \(\alpha\), this implies that \(f_x=0\) for all \(x>s\).
\end{proof}

\section{Further questions}\label{sec:further}
Previously, the minimal deletion shadow was only known for families of a given size of the form \([s]^n\), for some \(s\leq r\). For each \(s<r\), Theorem \ref{thm:discreteresult} gives \(n-1\) new sizes between \(s^n\) and \((s+1)^n\) for which the minimal deletion shadow is known. A natural further question would be whether we can construct families of minimal deletion shadow for an even denser set of sizes.
\begin{question}
Do families of the form 
\[\begin{split}\big\{w\in [s]^n:\ & w\text{ contains the symbol }s\text{ at most }k\text{ times, or }\\& w\text{ contains the symbol }s\text{ exactly }k+1\text{ times and the symbol }(s-1)\text{ at most }\ell\text{ times}\big\}\end{split}\]
have minimal deletion shadow given their size?
\end{question}

Note that this does not conflict with the result of Leck. His proof rests on a counterexample at a single value, and so is very specific in what it forbids. In particular, it does not preclude dense sets of sizes at which initial segments of some ordering give minimal deletion shadow.

More generally, let \(c_x(w)\) denote the number of occurrences of the character \(x\) in the word \(w\), and let the \emph{signature} of a word be \(\sigma(w)=(c_r(w),\dots,c_1(w))\). Let \(<_L\) denote the lexicographic order, where for any two signatures \(p,q\), we say \(p<_Lq\) if \(p_i<q_i\) at the first index \(i\) where \(p_i\neq q_i\). We pose the following conjecture:
\begin{conjecture}
    For any signature \(s\), the family \(\{w\in A^n: \sigma(w)\leq_Ls\}\) has minimal deletion shadow given its size.
\end{conjecture}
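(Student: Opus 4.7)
The plan is to mirror the proof of Theorem~\ref{thm:fractionalresult}, extending it to the general lex-on-signatures setting via a suitable generalization of the fractional Hamming ball. Given a signature $s = (s_r, s_{r-1}, \ldots, s_1)$, I would define a \emph{fractional signature ball} $b^{(n)}_{s,\alpha}$ on $A^n$ recursively as follows: on words with $c_r(w) < s_r$ it takes value $1$; on words with $c_r(w) = s_r$, identifying the $n - s_r$ positions not occupied by $r$ with a word in $[r-1]^{n - s_r}$, it equals the fractional signature ball $b^{(n-s_r)}_{(s_{r-1},\ldots,s_1),\alpha}$ of shorter signature; and it is $0$ elsewhere, with the parameter $\alpha$ interpolating at the deepest nesting level. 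A direct computation generalizing the observation for Hamming balls should give $\Delta b^{(n)}_{s,\alpha} = b^{(n-1)}_{s,\alpha}$, and the signature ball realizes the discrete lex-initial-segment family when $\alpha \in \{0,1\}$.

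Next, I would introduce a \emph{signature-$s$ compression} condition tailored to this nested structure: roughly, wherever $f$ is non-zero, $f$ is forced to equal $1$ on the recursively-nested solid block dictated by $s$ above that point. The signature ball satisfies this condition, and any down-compressed discrete family (to which we reduce the conjecture via Lemma~\ref{lem:downcompression}), viewed through its indicator as a fractional system, is automatically $s$-compressed for every signature $s$. Thus to prove the conjecture it suffices to establish a fractional version: every $s$-compressed $f$ with $|f| = |b^{(n)}_{s,\alpha}|$ satisfies $|\Delta f| \geq |\Delta b^{(n)}_{s,\alpha}|$.

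The fractional statement would be proved by induction on $n$, following the layer decomposition $f_x(w) = f(w, x)$. The two key bounds survive: $|\Delta f| \geq \max_x |f_x|$ from deletion in the last coordinate, and a layer-averaged bound obtained by non-final-coordinate deletion together with averaging over the layers that the compression condition forces to be structurally comparable. By the inductive hypothesis each $|\Delta f_x|$ is minimized (given its weight) by a signature ball with the appropriate residual signature, and a three-case analysis parallel to Cases~1--3 of the proof of Theorem~\ref{thm:fractionalresult}, stratifying by the weight of the top layer $f_r$ relative to its predicted value in $b^{(n)}_{s,\alpha}$, should pin the optimum to $b^{(n)}_{s,\alpha}$ itself.

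The main obstacle will be this three-case analysis, since the residual structure on the layer $c_r = s_r$ is no longer a single-parameter family $b_{k',\alpha'}$ but an entire nested signature ball in $[r-1]^{n-s_r}$ with its own multi-parameter $(s_{r-1}, \ldots, s_1, \alpha')$ structure. The ``borrowing lemma'' used in Case~2 of the proof of Theorem~\ref{thm:fractionalresult} to move weight between top and bulk layers will need a second induction on the number of nonzero components of $s$, reducing each step to an instance of that Case~2/Case~3 calculation. Verifying that the joint optimum is the correctly-nested ball rather than some hybrid configuration, and pinning down the signature-compression condition precisely enough that both the ball and down-compressed discrete families satisfy it, is where the bulk of the effort will go; the remainder should be mechanical translation of the machinery of Theorem~\ref{thm:fractionalresult} into the iterated lex setting.
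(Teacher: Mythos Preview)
The statement you address is posed in the paper as an open \emph{conjecture}; the paper gives no proof, only the remark that Theorem~\ref{thm:discreteresult} is the special case $s=(0,\ldots,0,k,n-k,0,\ldots,0)$. There is therefore no argument in the paper to compare against.

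As a strategy your outline is sensible, and the structural ingredient $\Delta b^{(n)}_{s,\alpha}=b^{(n-1)}_{s,\alpha}$ is correct for the same reason as in the Hamming-ball case. But what you have written is a plan rather than a proof, and the step you yourself flag as the main obstacle is a genuine gap, not mechanical translation. The paper's Cases~2--3 succeed because with only two scalar parameters $(\alpha,\alpha')$ both $|g|$ and $|\Delta g|$ are affine in $(\alpha,\alpha')$ with explicitly comparable coefficients, so shifting weight toward the ball is monotone. For a general signature the top layer is itself a nested signature ball carrying its own multi-parameter family, and the optimization is no longer a comparison of two linear forms in two variables; moreover your induction on $n$ decomposes $f$ by the last coordinate while your signature ball is defined recursively by peeling off the top symbol $r$, and these two decompositions do not align in any obvious way once the signature has more than two nonzero entries. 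You would also need to specify the signature-compression condition precisely and verify that it is both satisfied by every down-compressed discrete family and restrictive enough to drive the induction---the paper's $s$-compression works because only one symbol is distinguished, and it is not clear what the right replacement is here. None of these points is addressed, and there is no reason to expect them to be routine; you are proposing a programme for attacking an open problem, not a proof.
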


The statement that families of the form \([s]^n\) have minimal deletion shadow corresponds to the special case of this conjecture where \(s\) has form \((0,\dots,0,n,0,\dots,0)\). Our Theorem \ref{thm:discreteresult} is then the special case where \(s\) has form \((0,\dots,0,k,n-k,0,\dots,0)\).

\bibliographystyle{plain}
\bibliography{deletion-shadows}

\begin{thebibliography}{1}

\bibitem{bollobas_isoperimetric_1991}
B.~Bollobás and I.~Leader.
\newblock Isoperimetric inequalities and fractional set systems.
\newblock {\em Journal of Combinatorial Theory, Series A}, 56(1):63--74, 1991.

\bibitem{bollobas_maximal_1993}
B.~Bollobás and I.~Leader.
\newblock Maximal sets of given diameter in the grid and the torus.
\newblock {\em Discrete Mathematics}, 122(1):15--35, 1993.

\bibitem{danh_structure_1996}
T~Danh and D.~Daykin.
\newblock The structure of {V}-order for integer vectors.
\newblock {\em Congressus Numerantium}, 113, 1996.

\bibitem{danh_ordering_1997}
T.~Danh and D.~Daykin.
\newblock Ordering {Integer} {Vectors} for {Coordinate} {Deletions}.
\newblock {\em Journal of the London Mathematical Society}, 55(3):417--426, 1997.

\bibitem{danh_sets_1997}
T.~Danh and D.~Daykin.
\newblock Sets of 0, 1 vectors with minimal sets of subvectors.
\newblock {\em Rostocker Mathematisches Kolloquium}, 50, 1997.

\bibitem{leck_nonexistence_2004}
U.~Leck.
\newblock Nonexistence of a {Kruskal}-{Katona} {Type} {Theorem} for {Subword} {Orders}.
\newblock {\em Combinatorica}, 24(2):305--312, 2004.

\bibitem{loomis_inequality_1949}
L.~H. Loomis and H.~Whitney.
\newblock An inequality related to the isoperimetric inequality.
\newblock {\em Bulletin of the American Mathematical Society}, 55(10):961--962, 1949.

\bibitem{raty_coordinate_2019}
E.~Räty.
\newblock Coordinate {Deletion} of {Zeroes}.
\newblock {\em The Electronic Journal of Combinatorics}, pages P3.50--P3.50, 2019.

\end{thebibliography}

\end{document}